\newtheorem{thm}{Theorem}[section]
\newtheorem{lem}[thm]{Lemma}
\newtheorem{coro}[thm]{Corollary}
\newtheorem{prop}[thm]{Proposition}
\newtheorem*{WI}{Weyl's Inequality}
\theoremstyle{definition}
\newtheorem{definition}[thm]{Definition}
\newtheorem{rem}[thm]{Remark}
\numberwithin{equation}{section}
\def\R{\mathfrak{R}}
\def\diag{{\rm diag}}
\def\sgn{{\rm sign}}
\newcommand{\intrz}{\preccurlyeq_{\rm int}}
\newcommand{\bint}[2]{\sideset{_{#1}}{_{#2}}{\mathop{\sim}}}
\newcommand{\bb}[2]{\sideset{_{#1}}{_{#2}}{\mathop{\approx}}}
\newcommand{\s}[1]{\sum_{#1}}
\def\la{\lambda}
\def\n{\mathfrak{n}}
\def\H{\mathcal{H}}
\def\RM{\R^{(1)}}
\journal{AAM}
\begin{document}

\begin{frontmatter}

\title{The converse of Weyl's eigenvalue inequality}
\author{Yi Wang\corref{cor1}}
\ead{wangyi@dlut.edu.cn}
\cortext[cor1]{Corresponding author.}
\author{Sainan Zheng\corref{cor2}}
\ead{zhengsainandlut@hotmail.com}

\address{School of Mathematical Sciences, Dalian University of Technology, Dalian 116024, China}

\begin{abstract}
We establish the converse of Weyl's eigenvalue inequality
for additive Hermitian perturbations of a Hermitian matrix.
\end{abstract}

\begin{keyword}
Hermitian matrix\sep eigenvalue\sep Weyl's inequality\sep inertia index
\MSC[2010]  15A42\sep 15B57\sep 26C10
\end{keyword}

\end{frontmatter}

\section{Introduction}

Let $A=(a_{ij})_{m\times n}$ be a complex matrix.
The {\it conjugate transpose} $A^*=(b_{ij})_{n\times m}$ of $A$ is defined by $b_{ij}=\overline{a_{ji}}$.
An $n\times n$ complex square matrix $A$ is called {\it Hermitian} if $A^*=A$.
It is well known that the eigenvalues of a Hermitian matrix are all real.
Throughout this paper we adopt the convention that they always arranged in non-increasing order:
$$\la_1(A)\ge\la_2(A)\ge\cdots\ge\la_n(A).$$
We also set $\la_i(A)=+\infty$ for $i<1$ and $\la_i(A)=-\infty$ for $i>n$ for convenience.

In 1912 Hermann Weyl \cite{Wey12} posed the following problem:
given the eigenvalues of two $n\times n$ Hermitian matrices $A$ and $B$,
how does one determine all possible sets of eigenvalues of the sum $A+B$?
He gave partial answers:
\begin{equation}\label{wi<}
\la_{i+j-1}(A+B)\le\la_{i}(A)+\la_{j}(B).
\end{equation}
Note that $\la_i(-C)=-\la_{n+1-i}(C)$ for any $n\times n$ Hermitian matrix $C$.
Hence \eqref{wi<} is equivalent to
\begin{equation}\label{wi>}
\la_{i+j-n}(A+B)\ge\la_{i}(A)+\la_j(B).
\end{equation}

Although Weyl's problem has been completely solved
(we refer the reader to Allen Knutson and Terence Tao's survey article \cite{KT01} on this topic),
Weyl's inequality \eqref{wi<} is still the source of a great many eigenvalue inequalities
(see \cite[\S 4.3]{HJ13} for instance).
Let $n_+(B)$ ($n_-(B)$, resp.) denote the positive (negative, resp.) index of inertia of $B$.
Then $n_+(B)$ ($n_-(B)$, resp.) is the number of positive (negative, resp.) eigenvalues of $B$
by Sylvester's law of inertia.
Denote $n_+(B)=p$ and $n_-(B)=q$.
Then by \eqref{wi<} and \eqref{wi>},
\begin{equation}\label{weyl-1}
\la_{i+q}(A)\le\la_i(A+B)\le\la_{i-p}(A).
\end{equation}
We also call \eqref{weyl-1} Weyl's inequality,
since it is equivalent to \eqref{wi<}.
Indeed,
assume that \eqref{weyl-1} holds for any $A$ and $B$.
Noting $n_+(B-\la_j(B)I)\le j-1$, we have
$$\la_{i+j-1}(A+B)=\la_{i+j-1}(A+B-\la_j(B)I)+\la_j(B)
\le\la_{i}(A)+\la_{j}(B)$$
by \eqref{weyl-1}.
Such a form \eqref{weyl-1} of Wely's inequality is often more convenient to use.
For example,
if $B$ is positive semi-definite, i.e., $n_-(B)=0$,
then $\la_i(A)\le\la_i(A+B)$ by
\eqref{weyl-1},
which is the monotonicity theorem.
Further, if $B=\alpha\alpha^*$ for some column vector $\alpha\in\mathbb{C}^n$,
then $n_+(B)\le 1$ and $n_-(B)=0$, and so
$$\la_n(A)\le\la_n(A+B)\le\la_{n-1}(A)\le\cdots\le\la_2(A+B)\le\la_1(A)\le\la_1(A+B),$$
which is the interlacing theorem for a rank-one Hermitian perturbation of a Hermitian matrix.
It is well known that the converse of this interlacing theorem is true
(see \cite[Theorems 4.3.26]{HJ13} for instance or Lemma \ref{10}).
In this paper we consider a more general problem: the converse of Weyl's inequality.
We first introduce some definitions and notations.

Let $\R$ ($\R_n$, resp.) denote the set of real polynomials (of degree $n$, resp.)
with only real roots and with positive leading coefficients.
In particular, let $\RM$ denote the set of monic polynomials in $\R$.
For $g\in\R$, we use $r_i(g)$ denote its roots and arrange them in non-increasing order:
$r_1(g)\ge r_2(g)\ge\cdots\ge r_n(g)$.
For convenience,
set $r_i(g)=+\infty$ for $i<1$ and $r_i(g)=-\infty$ for $i>\deg g$.

\begin{definition}
Let $f,g\in\R$ and $p,q\in\mathbb{N}$.
The polynomial $f$ is said to {\it $(p,q)$-interlace} the polynomial $g$,
denoted by $f\bint{p}{q} g$,
if
\begin{equation*}\label{pq-int}
r_{i+p}(g)\le r_{i}(f)\le r_{i-q}(g)
\end{equation*}
for all $i\in\mathbb{Z}$.
\end{definition}

The following properties are immediate from the definition.

\begin{prop}\label{fact}
\begin{enumerate}[\rm (a)]
  \item $f\bint{p}{q} f$ for any $p$ and $q$.
  \item $f\bint{p}{q} g$ is equivalent to $g\bint{q}{p} f$.
  \item $f\bint{p}{q} h$ and $h\bint{s}{t} g$ imply that $f\bint{p+s}{q+t} g$.
  \item $f\bint{p}{q} g$ implies that $f\bint{s}{t} g$ for any $s\ge p$ and $t\ge q$.
  \item $f\bint{p}{q} g$ implies that $-p\le \deg f-\deg g\le q$.
\end{enumerate}
\end{prop}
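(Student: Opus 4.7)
The plan is to derive all five items directly from the definition, using only the fact that $r_\bullet(h)$ is a non-increasing function of its index for any $h\in\R$ (extended by $+\infty$ for indices below $1$ and $-\infty$ for indices above $\deg h$). Parts (a)--(d) are essentially formal re-indexings; the only genuine content is in (e), which is where I expect the main obstacle.

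For (a), since $p,q\ge 0$ we have $i+p\ge i\ge i-q$, so monotonicity gives $r_{i+p}(f)\le r_i(f)\le r_{i-q}(f)$ at once. For (b), substituting $j=i-q$ into $r_i(f)\le r_{i-q}(g)$ yields $r_{j+q}(f)\le r_j(g)$, and substituting $j=i+p$ into $r_{i+p}(g)\le r_i(f)$ yields $r_j(g)\le r_{j-p}(f)$; since the substitutions range over all of $\mathbb{Z}$, this is exactly $g\bint{q}{p} f$, and the converse is identical.

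For (c), I would chain the two defining relations: applying $h\bint{s}{t} g$ at indices $i+p$ and $i-q$ gives $r_{i+p+s}(g)\le r_{i+p}(h)$ and $r_{i-q}(h)\le r_{i-q-t}(g)$, so stacking these around $r_{i+p}(h)\le r_i(f)\le r_{i-q}(h)$ produces $r_{i+p+s}(g)\le r_i(f)\le r_{i-q-t}(g)$. For (d), if $s\ge p$ and $t\ge q$ then $i+s\ge i+p$ and $i-t\le i-q$, so monotonicity of $r_\bullet(g)$ sandwiches the new inequality inside the old one.

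The main obstacle is (e), the only part that genuinely exploits the extended-real convention rather than manipulating finite inequalities. To prove $\deg f-\deg g\le q$, I would evaluate the right half of the defining relation at $i=\deg f$: the left side $r_{\deg f}(f)$ is a finite real number, so $r_{\deg f}(f)\le r_{\deg f-q}(g)$ forces $\deg f-q\le \deg g$, since otherwise the right side would equal $-\infty$. Symmetrically, evaluating the left half at $i=\deg f+1$ gives $r_{\deg f+1+p}(g)\le r_{\deg f+1}(f)=-\infty$, which forces $\deg f+1+p>\deg g$, i.e.\ $\deg g-\deg f\le p$. (One could also deduce the second bound from the first via part (b), applied to $g\bint{q}{p} f$.)
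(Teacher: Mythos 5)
Your proof is correct, and since the paper states this proposition without proof (``immediate from the definition''), your direct verification from the definition is exactly the intended argument; parts (a)--(d) are the straightforward re-indexings you describe, and part (e) correctly exploits the $\pm\infty$ conventions.
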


There are two particular interesting special cases in the definition.
When $f\bint{1}{0} g$, we say that $f$ {\it interlaces} $g$ and denote by $f\intrz g$. 
It is well known that $f(x)$ and $g(x)$ are interlacing ($f\intrz g$ or $g\intrz f$)
if and only if for any $a,b\in\mathbb{R}$,
all roots of the polynomial $af(x)+bg(x)$ are real.
(see Obreschkoff \cite[Satz 5.2]{Obr63} for instance).
When $f\bint{1}{1} g$, we say that $f$ and $g$ are {\it compatible} and denote simply by $f\sim g$.
Chudnovsky and Seymour \cite[Theorem 3.4]{CS07} showed that $f$ and $g$ are compatible if and only if for any $a,b\ge 0$,
all roots of the polynomial $af(x)+bg(x)$ are real.
Compatible and interlacing properties of polynomials
are often encountered in combinatorics \cite{CS07,Fis08,LW07rz,MSS15,SV15,WYjcta05,YZ17}.

Let $A$ and $B$ be two
Hermitian matrices.
Denote $A\bint{p}{q} B$
if their characteristic polynomials $\det (\la I-A)\bint{p}{q}\det (\la I-B)$.
Using this notation, Weyl's inequality \eqref{weyl-1} can be restated as follows.

\begin{WI}
Let $A$ and $B$ be two Hermitian matrices of the same order.
Assume that $n_+(B-A)\le p$ and $n_-(B-A)\le q$. Then
$A\bint{p}{q} B$.
\end{WI}

For $f\in\RM$,
denote by $\H(f)$ the set of Hermitian matrices with characteristic polynomial $f$.
The objective of this note is to establish the converse of Weyl's inequality.

\begin{thm}\label{main-thm}
Let $f,g\in\RM$ have the same degree and $f\bint{p}{q} g$.
Then there exist $A\in\H(f)$ and $B\in\H(g)$ such that
$n_+(B-A)\le p$ and $n_-(B-A)\le q$.
\end{thm}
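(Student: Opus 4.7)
The plan is to induct on $p+q$. For the base case $p+q=0$, the hypothesis $f\bint{0}{0}g$ together with $\deg f=\deg g$ forces $r_i(f)=r_i(g)$ for every $i$, hence $f=g$, and I take $A=B$ equal to any single matrix in $\H(f)=\H(g)$. For the inductive step, assume $p+q\ge 1$. Using Proposition~\ref{fact}(b) and the identity $n_\pm(B-A)=n_\mp(A-B)$, I may assume $p\ge 1$. The strategy is to peel off a single rank-one positive semidefinite perturbation, by exhibiting an intermediate monic polynomial $h$ of degree $n$ with
$$f\bint{1}{0}h\quad\text{and}\quad h\bint{p-1}{q}g,$$
then applying the induction hypothesis to the pair $(h,g)$ and the known converse of the rank-one interlacing theorem (Lemma~\ref{10}) to the pair $(f,h)$.

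Writing $x_i:=r_i(f)$ and $y_i:=r_i(g)$, I take $h$ to be the monic polynomial with roots
$$z_i:=\max\{x_i,\,y_{i+p-1}\},\qquad i=1,\ldots,n.$$
Each $z_i$ is finite because $x_i$ is, and $(z_i)$ is non-increasing as the entrywise maximum of two non-increasing sequences. A routine check using only the hypothesis $y_{i+p}\le x_i\le y_{i-q}$ verifies both $z_{i+1}\le x_i\le z_i$ (so $f\bint{1}{0}h$) and $y_{i+p-1}\le z_i\le y_{i-q}$ (so $h\bint{p-1}{q}g$). The only nontrivial step is the comparison $y_{i+p-1}\le y_{i-q}$, which requires $p+q\ge 1$ and is therefore guaranteed by the standing assumption $p\ge 1$.

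Having produced $h$, I invoke the inductive hypothesis on $h\bint{p-1}{q}g$ to obtain $A'\in\H(h)$ and $B\in\H(g)$ with $n_+(B-A')\le p-1$ and $n_-(B-A')\le q$, and Lemma~\ref{10} on $f\bint{1}{0}h$ to obtain $A_0\in\H(f)$ and $A_0'\in\H(h)$ with $A_0'-A_0=vv^*$ a rank-one positive semidefinite perturbation. Any two members of $\H(h)$ are unitarily similar, so I pick a unitary $U$ with $A'=UA_0'U^*$ and set $A:=UA_0U^*\in\H(f)$; then $A'-A=(Uv)(Uv)^*$ remains rank-one positive semidefinite. Writing $B-A=(B-A')+(A'-A)$, the subadditivity $n_\pm(X+Y)\le n_\pm(X)+n_\pm(Y)$, which is an immediate corollary of Weyl's inequality \eqref{weyl-1} applied to inertia indices, gives $n_+(B-A)\le(p-1)+1=p$ and $n_-(B-A)\le q+0=q$, closing the induction.

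The main obstacle is the explicit construction of $h$: one must guess a root sequence that simultaneously fits inside the window forced by $f\bint{1}{0}h$ and the window forced by $h\bint{p-1}{q}g$, and then verify that these two constraints are compatible. The choice $z_i=\max\{x_i,y_{i+p-1}\}$ is natural in hindsight but not forced by the hypotheses (the ``upper envelope'' $z_i=\min\{x_{i-1},y_{i-q}\}$ works equally well). Once this combinatorial step is secured, the reduction to the rank-one converse together with the unitary freedom inside each isospectral class $\H(h)$ makes the remainder of the argument mechanical.
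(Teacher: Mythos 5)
Your proof is correct, and it is essentially the alternative argument the paper itself flags in the remark after its main proof ("We can also prove Theorem \ref{main-thm} directly from Lemmas \ref{pqst} and \ref{10} by induction on $p+q$"). The paper's written-out proof takes a two-stage route: Lemma \ref{pqst} with $(s,t)=(0,q)$ produces $h$ satisfying $f\bint{p}{0}h$ and $g\bint{q}{0}h$; then Lemma \ref{p0-lem} (itself an induction on $p$ peeling off rank-one bumps) is applied twice and the results are glued by unitary conjugation; finally Lemma \ref{pqm} is used to translate the target into an explicit $\s{p}-\s{q}$ decomposition. You instead run a single induction on $p+q$, peel off a rank-one bump at each step via Lemma \ref{10}, glue with unitary conjugation, and close with subadditivity of the inertia indices, which indeed follows at once from Weyl's inequality \eqref{weyl-1}. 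Two small points of comparison. First, rather than invoking Lemma \ref{pqst} abstractly (with $s=1$, $t=0$, $d=n$), you build the intermediate polynomial explicitly as the lower envelope $z_i=\max\{x_i,y_{i+p-1}\}$; that is exactly the left endpoint of the interval $I_i$ in the proof of Lemma \ref{pqst}, so the verification you sketch is the right one, and the only place $p\ge 1$ is needed is the comparison $y_{i+p-1}\le y_{i-q}$, as you observe. Second, your final step uses subadditivity of $n_\pm$ where the paper uses the $\s{p}-\s{q}$ normal form of Lemma \ref{pqm}; both are valid and interchangeable. The trade-off is that the paper's route modularizes the hard part into Lemma \ref{p0-lem}, which is reused, while your route is more self-contained and uniform (one induction instead of a nested one) at the cost of having to re-derive the envelope construction inline.
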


In the next section we investigate the $(p,q)$-interlacing property.
Some known results about the interlacing and compatible polynomials
will be extended to $(p,q)$-interlacing polynomials.
As an application we prove Theorem \ref{main-thm}.
In \S 3 we discuss some results closely related to Theorem \ref{main-thm}.
These results can also be obtained from the same approach used in \S 2.

\section{$(p,q)$-interlacing property and proof of the theorem}

Let $\n(f,r)$ be the number of real roots of $f(x)$ in the interval $[r,+\infty)$.
It is well known that $f$ interlaces $g$ if and only if
$\n(f,r)\le\n(g,r)\le \n(f,r)+1$ for any $r\in\mathbb{R}$.
Chudnovsky and Seymour \cite[Theorem 3.4]{CS07} showed that $f$ and $g$ are compatible if and only if
$|\n(g,r)-\n(f,r)|\le 1$ for any $r\in\mathbb{R}$.
A common generalization of these two results is the following.

\begin{lem}\label{nfr-ngr}
Suppose that $f,g\in\R$ and $p,q\in\mathbb{N}$.
Then $f\bint{p}{q} g$ if and only if
$-p\le\n(f,r)-\n(g,r)\le q$ for any $r\in\mathbb{R}$.
\end{lem}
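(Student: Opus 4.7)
The plan is to rewrite the $(p,q)$-interlacing condition in terms of counting functions by using the elementary equivalence
\[
\n(f,r)\ge i \iff r_i(f)\ge r,
\]
which holds in both directions once one adopts the conventions $r_i(f)=+\infty$ for $i<1$ and $r_i(f)=-\infty$ for $i>\deg f$. This dictionary turns statements about the positions of ordered roots into statements about how many roots lie in $[r,+\infty)$, and vice versa.

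For the forward direction, I would unpack $f\bint{p}{q} g$ as the pair of inequalities $r_{i+p}(g)\le r_i(f)$ and $r_{i+q}(f)\le r_i(g)$ for every integer $i$ (the second inequality is just $r_i(f)\le r_{i-q}(g)$ reindexed). Fix $r\in\mathbb{R}$ and set $k=\n(f,r)$, so that $r_k(f)\ge r>r_{k+1}(f)$. Applying $r_{(k+1)+p}(g)\le r_{k+1}(f)<r$ shows that $g$ has fewer than $k+1+p$ roots in $[r,+\infty)$, giving $\n(g,r)\le\n(f,r)+p$. The symmetric argument applied to $r_{i+q}(f)\le r_i(g)$ with $i=\n(g,r)+1$ yields $\n(f,r)\le\n(g,r)+q$, so $-p\le\n(f,r)-\n(g,r)\le q$.

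For the converse, suppose the counting inequality holds for every $r$. To prove $r_{i+p}(g)\le r_i(f)$ for an arbitrary $i\in\mathbb{Z}$, I would argue by contradiction: the cases $i\le 0$ or $i+p>\deg g$ are automatic from the boundary conventions, so assume $1\le i$ and $i+p\le\deg g$ and suppose $r_{i+p}(g)>r_i(f)$. Setting $r:=r_{i+p}(g)$, the dictionary gives $\n(g,r)\ge i+p$ while $r_i(f)<r$ forces $\n(f,r)\le i-1$, so $\n(g,r)-\n(f,r)\ge p+1$, contradicting the hypothesis. The inequality $r_i(f)\le r_{i-q}(g)$ is handled symmetrically.

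The argument is purely combinatorial and the only obstacle I anticipate is bookkeeping: carefully matching the boundary conventions (the $\pm\infty$ values of $r_i$ outside $\{1,\dots,\deg\}$) with the trivial cases of the counting inequality, and making sure the strict/non-strict inequalities at the threshold $r=r_i(f)$ versus $r=r_{i+p}(g)$ line up correctly. Once the dictionary $\n(f,r)\ge i\iff r_i(f)\ge r$ is in place, the proof reduces to two short contrapositive computations, and it simultaneously recovers both the Obreschkoff characterization of interlacing ($p=1$, $q=0$) and the Chudnovsky–Seymour characterization of compatibility ($p=q=1$).
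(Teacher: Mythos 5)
Your proof is correct and takes essentially the same approach as the paper: both rely on the dictionary $\n(f,r)\ge i\iff r_i(f)\ge r$ to translate between ordered roots and counting functions. The paper phrases the converse directly (evaluating at $r=r_i(f)$ and using $\n(f,r_i(f))\ge i$) rather than by contradiction, but this is only a cosmetic difference.
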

\begin{proof}
Assume that $f\bint{p}{q} g$.
Let $r\in\mathbb{R}$ and $\n(f,r)=i$.
Then $r_{i-q}(g)\ge r_i(f)\ge r$.
Thus $\n(g,r)\ge i-q=\n(f,r)-q$.
Similarly,
$g\bint{q}{p} f$ implies that $\n(f,r)\ge\n(g,r)-p$.
We conclude that $-p\le\n(f,r)-\n(g,r)\le q$.

Conversely, assume that $-p\le\n(f,r)-\n(g,r)\le q$ for any $r\in\mathbb{R}$.
Then $\n(g,r_i(f))\ge\n(f,r_i(f))-q\ge i-q$,
and so $r_{i-q}(g)\ge r_i(f)$.
Similarly, $r_{i-p}(f)\ge r_i(g)$.
Thus $r_{i+p}(g)\le r_i(f)\le r_{i-q}(g)$, i.e.,
$f\bint{p}{q} g$.
\end{proof}

\begin{coro}\label{ppq}
Let $f,g,h\in\R$.
Then $f\bint{p}{q} g$ if and only if 
$(fh)\bint{p}{q} (gh)$.
\end{coro}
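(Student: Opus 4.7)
The plan is to deduce this immediately from Lemma \ref{nfr-ngr} by observing that the counting function $\n(\cdot,r)$ is additive under multiplication of polynomials in $\R$.

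First I would note that for any $f,h\in\R$ and any $r\in\mathbb{R}$, we have $\n(fh,r)=\n(f,r)+\n(h,r)$. This is because the roots of $fh$ (counted with multiplicity) are exactly the union of the roots of $f$ and the roots of $h$; since both $f,h\in\R$ all these roots are real, so counting those in $[r,+\infty)$ gives a simple sum.

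Next, by Lemma \ref{nfr-ngr}, $f\bint{p}{q} g$ is equivalent to
\[
-p\le \n(f,r)-\n(g,r)\le q \quad \text{for all } r\in\mathbb{R},
\]
while $(fh)\bint{p}{q}(gh)$ is equivalent to
\[
-p\le \n(fh,r)-\n(gh,r)\le q \quad \text{for all } r\in\mathbb{R}.
\]
But the additivity just noted gives
\[
\n(fh,r)-\n(gh,r)=\bigl(\n(f,r)+\n(h,r)\bigr)-\bigl(\n(g,r)+\n(h,r)\bigr)=\n(f,r)-\n(g,r),
\]
so the two inequalities are the same condition, and the equivalence follows.

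There is no real obstacle here; the only point that requires care is that $\n(\cdot,r)$ counts roots with multiplicity, so that the factorization $fh$ genuinely yields a sum. This is automatic from the convention established before Lemma \ref{nfr-ngr} (where $\n(f,r)$ is defined via the non-increasingly arranged roots $r_1(f)\ge r_2(f)\ge\cdots$, which lists every root as many times as its multiplicity).
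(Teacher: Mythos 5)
Your proof is correct and is exactly the argument the paper intends: the statement is presented as a corollary of Lemma~\ref{nfr-ngr} with no separate proof, and the additivity $\n(fh,r)=\n(f,r)+\n(h,r)$ together with that lemma is the whole content. Nothing is missing.
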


We next show that the converse of Proposition~\ref{fact}~(c) is also true.

\begin{lem}\label{pqst}
Suppose that $f\bint{p}{q} g$.
For $0\le s\le p$ and $0\le t\le q$,
denote $k=\max\{\deg f-t,\deg g-p+s\}$ and $m=\min\{\deg f+s,\deg g+q-t\}$.
Then for each integer $d\in [k,m]$,
there exists a real-rooted polynomial $h$ of degree $d$
such that $f\bint{s}{t} h$ and $h\bint{p-s}{q-t} g$.
\end{lem}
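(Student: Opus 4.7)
The plan is to recast the $(p,q)$-interlacing relations in terms of the root-counting function $\n$ via Lemma~\ref{nfr-ngr} and then construct $h$ by prescribing $\n(h,\cdot)$ directly as a step function with appropriate bounds.

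By Lemma~\ref{nfr-ngr}, the pair of relations $f\bint{s}{t}h$ and $h\bint{p-s}{q-t}g$ is jointly equivalent to the pointwise sandwich
\begin{equation*}
\alpha(r):=\max\{\n(f,r)-t,\,\n(g,r)-(p-s)\}\;\le\;\n(h,r)\;\le\;\min\{\n(f,r)+s,\,\n(g,r)+(q-t)\}=:\beta(r)
\end{equation*}
holding for every $r\in\mathbb R$. The first step is to verify $\alpha(r)\le\beta(r)$ throughout: the four pairwise comparisons reduce to $s+t\ge 0$, $(p-s)+(q-t)\ge 0$, and $-p\le\n(f,r)-\n(g,r)\le q$, and the last is precisely the hypothesis $f\bint{p}{q}g$ rephrased by Lemma~\ref{nfr-ngr}. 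This is the main obstacle; once it is in hand, the remainder is a routine step-function construction.

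Both $\alpha$ and $\beta$ are non-increasing integer-valued step functions whose jumps lie at the real roots of $fg$, with $\alpha(-\infty)=k$, $\beta(-\infty)=m$, and $\alpha(r)\le 0\le\beta(r)$ once $r$ exceeds every root of $fg$. For any fixed $d\in[k,m]$ I would next construct a non-increasing step function $N\colon\mathbb R\to\mathbb Z_{\ge 0}$ satisfying $N(-\infty)=d$, $N(+\infty)=0$, and $\alpha\le N\le\beta$ by a greedy sweep: initialize $N\equiv d$, and as $r$ moves from $-\infty$ to $+\infty$ replace $N(r)$ by $\min\{N(r^-),\beta(r)\}$ at each downward jump of $\beta$; the bound $N(r)\ge\alpha(r)$ is preserved because $\beta(r)\ge\alpha(r)$ and $N(r^-)\ge\alpha(r^-)\ge\alpha(r)$ by induction. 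After the last jump of $\beta$, insert further unit drops of $N$ at arbitrary real points until $N$ reaches $0$, which is permitted since $\alpha\le 0$ in that region.

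Finally, take $h\in\RM$ to be the monic polynomial whose real roots, counted with multiplicity equal to the size of each corresponding drop, coincide with the jump points of $N$. Then $\deg h=d$, $h$ has only real roots, and $\n(h,r)=N(r)$ for every $r$, so the sandwich $\alpha\le\n(h,\cdot)\le\beta$ yields $f\bint{s}{t}h$ and $h\bint{p-s}{q-t}g$ by Lemma~\ref{nfr-ngr}, as required.
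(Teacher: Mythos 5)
Your proof is correct, and it takes a genuinely different (though in a precise sense dual) route from the paper's. The paper works directly with roots: it sets $a_i=\max\{r_{i+t}(f),r_{i+p-s}(g)\}$, $b_i=\min\{r_{i-s}(f),r_{i-q+t}(g)\}$, observes from $f\bint{p}{q}g$ that $a_i\le b_i$ and that $(a_i)$ is non-increasing, and then chooses a non-increasing sequence $r_i\in[a_i,b_i]$ as the roots of $h$. You instead pass everything through Lemma~\ref{nfr-ngr}, recast the target as a pointwise sandwich $\alpha\le\n(h,\cdot)\le\beta$, verify $\alpha\le\beta$ via the same four comparisons, and manufacture $\n(h,\cdot)$ as a step function by a greedy sweep. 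The two pictures correspond under the bijection between non-increasing finite real sequences and non-increasing integer-valued step functions $\n$; your $\alpha,\beta$ are the counting functions whose level sets are the paper's intervals $I_i$. What your version buys is conceptual economy: the argument sits entirely inside the $\n$-calculus already set up in Lemma~\ref{nfr-ngr}, and it cleanly isolates the key inequality $\alpha\le\beta$ rather than distributing it across index bookkeeping. What the paper's version buys is concreteness: the roots of $h$ are written down directly, and there is no need to argue that a left-continuous non-increasing integer step function decreasing from $d$ to $0$ is the counting function of a degree-$d$ real-rooted polynomial. One small point you could tighten: when you ``insert further unit drops,'' it would be cleaner to place them explicitly to the right of every root of $fg$, where you have already noted $\alpha\le 0\le\beta$; as written, ``after the last jump of $\beta$'' could be a point at which $\alpha$ is still positive, so the justification ``$\alpha\le 0$ in that region'' needs that slight relocation to be literally correct.
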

\begin{proof}
By the definition of $f\bint{p}{q} g$, we have
$$
r_{i+t}(f)\leq r_{i-q+t}(g),\quad r_{i+p-s}(g)\leq r_{i-s}(f),
$$
and so
$$\max\left\{r_{i+t}(f),r_{i+p-s}(g)\right\}\leq \min\left\{r_{i-s}(f),r_{i-q+t}(g)\right\}.$$
For $i=1,2,\ldots,m$,
denote $$a_i=\max\{r_{i+t}(f),r_{i+p-s}(g)\},\quad b_i=\min\{r_{i-s}(f),r_{i-q+t}(g)\}$$
and $I_i=[a_i,b_i]$ ($I_i=(a_i,b_i]$ if $a_i=-\infty$ or $I_i=[a_i,b_i)$ if $b_i=+\infty$).
Clearly, $(a_i)$ is non-increasing.
Hence we may choose $r_i\in I_i$ such that $(r_i)$ is non-increasing.
Now for each $d\in [k,m]$, define a polynomial $h(x)=\prod_{i=1}^{d}(x-r_i)$.
Then $r_i(h)=r_i\in I_i$ for $i=1,2,\ldots,d$, and so
$$
r_{i+t}(f)\leq r_i(h) \leq r_{i-s}(f),\quad
r_{i+p-s}(g) \leq r_i(h) \leq r_{i-q+t}(g).
$$
Thus $f\bint{s}{t} h$ and $h\bint{p-s}{q-t} g$.
\end{proof}

\begin{coro}[{\cite[Theorem 3.5]{CS07}}]
Suppose that $f,g\in\R$.
Then $f\sim g$ if and only if
there exists
$h\in\R$ such that $h\intrz f$ and $h\intrz g$.
\end{coro}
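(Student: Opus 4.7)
The plan is to read this corollary as an immediate consequence of Lemma \ref{pqst} in one direction and of the algebraic properties collected in Proposition \ref{fact} in the other direction, so no new combinatorial work on roots should be needed.

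For the ``if'' direction, suppose $h\intrz f$ and $h\intrz g$. Rewriting these in the $\bint{}{}$ notation, $h\bint{1}{0} f$ and $h\bint{1}{0} g$. By Proposition \ref{fact}(b) the first of these is equivalent to $f\bint{0}{1} h$. Concatenating $f\bint{0}{1} h\bint{1}{0} g$ via Proposition \ref{fact}(c) yields $f\bint{1}{1} g$, i.e., $f\sim g$. That settles the easier implication.

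For the ``only if'' direction, I would take $f\sim g$, which means $f\bint{1}{1} g$, and apply Lemma \ref{pqst} with $p=q=1$, $s=0$, $t=1$. The lemma then produces a real-rooted polynomial $h$ of some degree $d\in [k,m]$ with $f\bint{0}{1} h$ and $h\bint{1}{0} g$. Translating $f\bint{0}{1} h$ via Proposition \ref{fact}(b) gives $h\bint{1}{0} f$. Hence $h\intrz f$ and $h\intrz g$, as required.

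The only point that requires a brief sanity check is that the interval $[k,m]$ supplied by Lemma \ref{pqst} is non-empty for the chosen parameters. Substituting gives $k=\max\{\deg f-1,\deg g-1\}$ and $m=\min\{\deg f,\deg g\}$. By Proposition \ref{fact}(e) applied to $f\bint{1}{1} g$ we know $|\deg f-\deg g|\le 1$, and in each of the three cases (equality, or one degree exceeding the other by one) a routine comparison shows $k\le m$, so an admissible integer $d$ exists. I do not expect any genuine obstacle here: the corollary is essentially a repackaging of Lemma \ref{pqst} at the parameters $p=q=1$, $s=0$, $t=1$, combined with the elementary symmetry and transitivity rules in Proposition \ref{fact}.
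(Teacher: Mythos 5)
Your proof is correct and is exactly the intended deduction: the ``if'' direction is immediate from Proposition~\ref{fact}(b)--(c), and the ``only if'' direction is Lemma~\ref{pqst} applied with $p=q=1$, $s=0$, $t=1$, which produces $h$ with $f\bint{0}{1} h$ and $h\bint{1}{0} g$, i.e.\ $h\intrz f$ and $h\intrz g$. Your degree check confirming $[k,m]\neq\emptyset$ in each of the three cases permitted by Proposition~\ref{fact}(e) is a sensible sanity check that the paper leaves implicit.
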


It is easy to see that if $f\intrz g$,
then $f'\intrz g'$.
Also, Chudnovsky and Seymour \cite[Theorem 3.1]{CS07} showed that
if $f\sim g$,
then $f'\sim g'$.
We have the following more general result.

\begin{coro}
If $f\bint{p}{q} g$,
then the derivative $f'\bint{p}{q} g'$.
\end{coro}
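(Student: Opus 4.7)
The plan is to induct on the integer parameter $p+q$, using Lemma~\ref{pqst} at each step to peel off one unit of the interlacing and reduce to the already-known derivative result for ordinary interlacing.

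The base case $p+q = 0$ is immediate. The relation $f \bint{0}{0} g$ forces $r_i(f) = r_i(g)$ for every $i$, so $f$ and $g$ share the same multiset of roots; since both lie in $\R$, they differ only by a positive scalar, i.e., $f = cg$ for some $c > 0$, whence $f' = cg'$ and $f' \bint{0}{0} g'$ trivially.

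For the inductive step, assume $p + q \ge 1$ and suppose first that $p \ge 1$ (the case $p=0, q\ge 1$ is symmetric, using Lemma~\ref{pqst} with $s = 0, t = 1$ instead). I would apply Lemma~\ref{pqst} with $s = 1, t = 0$. The admissible-degree range $[k,m]$ in that lemma is non-empty precisely because $p + q \ge 1$ together with the degree bound of Proposition~\ref{fact}~(e); so the lemma yields some $h \in \R$ satisfying $f \bint{1}{0} h$ and $h \bint{p-1}{q} g$. The first relation is exactly $f \intrz h$, and the classical observation recalled just before this corollary --- if $f \intrz h$ then $f' \intrz h'$ --- upgrades it to $f' \bint{1}{0} h'$. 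The second relation, combined with the inductive hypothesis (applicable since $(p-1) + q < p + q$), yields $h' \bint{p-1}{q} g'$. Composing these two via Proposition~\ref{fact}~(c) gives $f' \bint{p}{q} g'$, closing the induction.

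The main obstacle, conceptually, is recognizing that Lemma~\ref{pqst} is the right decomposition tool: it lets one factor a $(p,q)$-interlacing as a simple interlacing followed by a $(p-1, q)$-interlacing, exactly matching the scope of the classical derivative-preserves-interlacing result. Once this decomposition is identified, the remaining work is routine bookkeeping: verify feasibility of Lemma~\ref{pqst}, invoke the known statement for $\intrz$, and assemble via the transitivity in Proposition~\ref{fact}~(c).
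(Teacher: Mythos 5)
Your proof is correct and essentially matches the paper's: induct on $p+q$, use Lemma~\ref{pqst} to peel off a single interlacing step, apply the classical fact that $f\intrz h$ implies $f'\intrz h'$, and recombine via Proposition~\ref{fact}~(c). The only cosmetic differences are that you split off the interlacing on the $f$-side (choosing $s=1,t=0$) while the paper splits it off on the $g$-side (choosing $s=p-1,t=q$), and your base case is $p+q=0$ versus the paper's $p+q=1$.
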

\begin{proof}
We proceed by induction on $p+q$.
The statement for $p+q=1$ is just the interlacing case.
Suppose that $p+q>1$ and $f\bint{p}{q} g$.
We may assume, without loss of generality, that $p\ge 1$.
Then by Lemma \ref{pqst},
there exists $h$ such that $f\bint{p-1}{q} h$ and $h\bint{1}{0} g$.
By the inductive hypothesis,
$f'\bint{p-1}{q} h'$ and $h'\bint{1}{0} g'$.
Thus $f'\bint{p}{q} g'$ by Proposition~\ref{fact}~(c).
\end{proof}

For convenience,
we introduce an abbreviative notation.
If a Hermitian matrix $A=\sum_{i=1}^{p}{\alpha_i}{\alpha_i}^*$,
where $\alpha_i$ are $p$ (zero or nonzero) complex vectors,
then we denote simply by $A=\s{p}$.
Also, denote $\s{0}=0$.
Obviously, $\s{p}$ is positive semi-definite of rank at most $p$.
It is also clear that $\s{p}+\s{q}=\s{p+q}$ and
$U\left(\s{p}\right)U^*=\s{p}$ for any matrix $U$.

\begin{lem}\label{pqm}
Let $H$ be a Hermitian matrix.
Then $n_+(H)\le p$ and $n_-(H)\le q$ if and only if $H=\s{p}-\s{q}$.
\end{lem}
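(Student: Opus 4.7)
My plan is to handle the two directions separately: the spectral theorem will give the ``only if'' direction, and eigenvalue monotonicity (a consequence of Weyl's inequality already proved above) will give the ``if'' direction.

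For the ``only if'' direction, I would start from the spectral decomposition $H=\sum_{i=1}^{n}\la_i(H)\, u_iu_i^*$ with orthonormal eigenvectors $u_i$, and then split the sum according to the sign of $\la_i(H)$:
\[
H=\sum_{\la_i(H)>0}\bigl(\sqrt{\la_i(H)}\,u_i\bigr)\bigl(\sqrt{\la_i(H)}\,u_i\bigr)^{*}-\sum_{\la_i(H)<0}\bigl(\sqrt{-\la_i(H)}\,u_i\bigr)\bigl(\sqrt{-\la_i(H)}\,u_i\bigr)^{*}.
\]
The positive part is a sum of exactly $n_+(H)\le p$ rank-one positive semi-definite terms, and the negative part is a sum of exactly $n_-(H)\le q$ such terms. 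I would then pad each sum with zero vectors to reach precisely $p$ and $q$ summands respectively, obtaining $H=\s{p}-\s{q}$; it is important here that the author's convention for $\s{p}$ explicitly allows the $\alpha_i$ to be zero.

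For the ``if'' direction, suppose $H=P-Q$ with $P=\s{p}$ and $Q=\s{q}$, both positive semi-definite of rank at most $p$ and $q$ respectively. Since $-Q$ is negative semi-definite, we have $n_-(-Q)=0$, so Weyl's inequality \eqref{weyl-1} applied to $H=P+(-Q)$ yields $\la_i(H)\le\la_i(P)$ for every $i$. The rank bound on $P$ forces $\la_{p+1}(P)=0$, hence $\la_{p+1}(H)\le 0$, giving $n_+(H)\le p$. Applying the same reasoning to $-H=Q-P$ gives $n_-(H)=n_+(-H)\le q$.

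I expect no serious obstacle: this lemma is essentially the matrix-theoretic version of writing a real number as a difference of two nonnegatives. The only subtlety is the zero-padding bookkeeping in the ``only if'' direction, which is what lets the two sums have exactly $p$ and $q$ terms rather than only the tighter counts $n_+(H)$ and $n_-(H)$.
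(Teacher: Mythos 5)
Your proof is correct and follows essentially the same route as the paper: spectral decomposition plus zero-padding for the ``only if'' direction, and eigenvalue/inertia monotonicity for the ``if'' direction. The paper compresses the ``if'' direction into the chain $n_+(\s{p}-\s{q})\le n_+(\s{p})\le\mathrm{rk}(\s{p})\le p$ without explicitly invoking Weyl's inequality, but your appeal to \eqref{weyl-1} is just an unpacking of that same monotonicity step.
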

\begin{proof}
Clearly,
$$n_+\left(\s{p}-\s{q}\right)\le n_+\left(\s{p}\right)\le\text{rk}\left(\s{p}\right)\le p$$
and
$$n_-\left(\s{p}-\s{q}\right)=n_+\left(\s{q}-\s{p}\right)\le q.$$

Conversely, if $n_+(H)\le p$ and $n_-(H)\le q$,
then by the spectral decomposition and Sylvester's law of inertia for Hermitian matrices,
$$H=\sum_{i}\la_i(H)P_iP_i^*=\s{p}-\s{q},$$
where $P_i$ are the corresponding orthonormal eigenvectors to $\la_i(H)$.
\end{proof}

The following folklore result is the converse of the interlacing theorem
for a rank-one Hermitian perturbation of a Hermitian matrix
(see \cite[Theorems 4.3.26]{HJ13} for instance).
We give a direct proof of it for completeness.

\begin{lem}\label{10}
Let $f,g\in\RM_n$ and $f\intrz g$.
Then there exist a Hermitian matrix $A$ and a complex vector $\alpha$
such that $A\in\H(f)$ and $A+\alpha\alpha^*\in\H(g)$.
\end{lem}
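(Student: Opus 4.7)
The plan is to reduce to the case where the roots of $f$ are all distinct, then realize $A$ as a diagonal matrix and determine $\alpha$ via the matrix determinant lemma. Write $\mu_i = r_i(f)$ and $\la_i = r_i(g)$, so $f\intrz g$ means $\la_{i+1}\le\mu_i\le\la_i$ for every $i$.

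For the reduction, let $\tilde\mu_1>\cdots>\tilde\mu_k$ be the distinct roots of $f$ with multiplicities $m_1,\ldots,m_k$. Whenever $\mu_i=\mu_{i+1}=\tilde\mu_j$, the interlacing sandwich $\mu_i\ge\la_{i+1}\ge\mu_{i+1}$ collapses, so $\la_{i+1}=\tilde\mu_j$; iterating, each $\tilde\mu_j$ is a root of $g$ with multiplicity at least $m_j-1$. Thus $h=\prod_{j}(x-\tilde\mu_j)^{m_j-1}$ divides both $f$ and $g$; write $f=h\tilde f$ and $g=h\tilde g$. By Corollary \ref{ppq}, $\tilde f\intrz\tilde g$, and $\tilde f$ has only simple roots. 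If I can build $\tilde A\in\H(\tilde f)$ and $\tilde\alpha$ with $\tilde A+\tilde\alpha\tilde\alpha^*\in\H(\tilde g)$, then taking $A=\tilde A\oplus D$ (where $D$ is the diagonal matrix whose entries are the roots of $h$) and $\alpha$ equal to $\tilde\alpha$ padded with zeros yields $A\in\H(f)$ and $A+\alpha\alpha^*\in\H(g)$, since the rank-one update acts trivially on the $D$ block.

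So I may assume $\mu_1>\cdots>\mu_n$ are distinct and set $A=\diag(\mu_1,\ldots,\mu_n)$. For a column vector $\alpha=(a_1,\ldots,a_n)^T$, the matrix determinant lemma gives
\[
\det(\la I-A-\alpha\alpha^*)=\det(\la I-A)\bigl(1-\alpha^*(\la I-A)^{-1}\alpha\bigr)=f(\la)-\sum_{i=1}^{n}|a_i|^2\prod_{j\ne i}(\la-\mu_j).
\]
Setting this equal to $g(\la)$ and evaluating at $\la=\mu_k$ forces the necessary condition $|a_k|^2=-g(\mu_k)/f'(\mu_k)$. The heart of the argument is to verify this quantity is non-negative. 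Since the $\mu_i$ are distinct, $f'(\mu_k)=\prod_{j\ne k}(\mu_k-\mu_j)$ has sign $(-1)^{k-1}$. The interlacing gives $\mu_k-\la_i\le 0$ for $i\le k$ (since $\la_i\ge\mu_i\ge\mu_k$) and $\mu_k-\la_i\ge 0$ for $i>k$, so $g(\mu_k)$ has sign $(-1)^k$ (or is zero). Therefore $-g(\mu_k)/f'(\mu_k)\ge 0$, and I take $a_k$ to be its non-negative square root.

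With these values, $\det(\la I-A-\alpha\alpha^*)$ and $g(\la)$ are both monic polynomials of degree $n$ agreeing at the $n$ distinct points $\mu_1,\ldots,\mu_n$, hence are identically equal, so $A+\alpha\alpha^*\in\H(g)$. The only genuine hurdle is the sign analysis producing a legitimate $|a_k|^2\ge 0$; the reduction to distinct roots and the polynomial identification at the end are essentially routine.
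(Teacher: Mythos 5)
Your proof is correct and takes essentially the same approach as the paper: both set $A$ diagonal with the roots of $f$, derive the identity $\det(\la I-A-\alpha\alpha^*)=f(\la)-\sum_k|a_k|^2\prod_{j\neq k}(\la-\mu_j)$ (you via the matrix determinant lemma, the paper via expanding $g$ in the basis $\{f,f_1,\dots,f_n\}$, which yields the same coefficients $c_k=g(\mu_k)/f'(\mu_k)=-|a_k|^2$), and carry out the identical sign analysis. The only difference is cosmetic: you reduce to the case where $f$ has simple roots by factoring out $\gcd(f,f')$ and tolerate $a_k=0$ when $g(\mu_k)=0$, whereas the paper reduces to the coprime case by factoring out $\gcd(f,g)$ so that the $c_k$ are strictly negative; both reductions are valid and the padding step is the same.
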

\begin{proof}
Consider first the special case that $f$ and $g$ are coprime.
Then $f$ has only simple roots by $f\intrz g$.
Denote $r_i=r_i(f)$ and $f_i(x)=\frac{f(x)}{x-r_i}$.
Then $f(x),f_1(x),\ldots,f_n(x)$
form a basis of the vector space $\mathbb{R}[x]_{n}$ of real polynomials with
degree less than $n+1$.
Let
$g(x)=f(x)+\sum_{i=1}^nc_if_i(x)$.
Then $g(r_i)=c_if_i(r_i)$.
Note that $\sgn [f_i(r_i)]=(-1)^{i-1}$ and $\sgn [g(r_i)]=(-1)^{i}$.
Hence $c_i<0$.
Define $A=\diag(r_1,\ldots,r_n)$ and $\alpha=(a_1,\ldots,a_n)^T$,
where $a_i=\sqrt{-c_i}$.
Then $\det (xI-A)=f(x)$.
On the other hand, we have 
\begin{equation*}
\det (xI-A-\alpha\alpha^*)=\det (xI-A)-\sum_{i=1}^na_i^2\prod_{j\neq i}(x-r_j)=g(x).
\end{equation*}
Thus $A\in\H(f)$ and $A+\alpha\alpha^*\in\H(g)$.

Consider next the general case.
Let $f=(f,g)f_1$ and $g=(f,g)g_1$.
Then $f_1\intrz g_1$ and $(f_1,g_1)=1$.
Thus there are two Hermitian matrices $A_1\in\H(f_1)$ and $A_1+\alpha_1\alpha_1^*\in\H(g_1)$.
Assume that $(f,g)=\prod_{j=1}^m(x-s_j)$ and define
$$A=\left[\begin{array}{cccc}
A_1 & & & \\
& s_1 &&  \\
&&\ddots&\\
&&& s_m\\
\end{array}\right],\quad
\alpha=\left[\begin{array}{c}
\alpha_1 \\
0 \\
\vdots \\
0 \\
\end{array}\right].
$$
Then $A\in\H(f)$ and $A+\alpha\alpha^*\in\H(g)$.
This completes the proof.
\end{proof}

\begin{lem}\label{p0-lem}
Let $f,g\in\RM_n$ and $f\bint{p}{0} g$.
Then there exist two Hermitian matrices $A\in\H(f)$ and $B\in\H(g)$
such that $B-A=\s{p}$.
\end{lem}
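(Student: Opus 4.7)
My plan is to prove Lemma \ref{p0-lem} by induction on $p$. The base case $p=0$ is trivial, since $f\bint{0}{0} g$ forces $f=g$ and any $A\in\H(f)$ admits $B=A$ with $B-A=0=\s{0}$. The case $p=1$ is exactly Lemma \ref{10}: the hypothesis $f\bint{1}{0} g$ coincides with $f\intrz g$, and that lemma produces $A\in\H(f)$ together with a vector $\alpha$ such that $A+\alpha\alpha^*\in\H(g)$, so $B-A=\alpha\alpha^*=\s{1}$.

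For $p\ge 2$, my idea is to peel off one interlacing step at a time via the intermediate-polynomial construction of Lemma \ref{pqst}. Taking $s=1$ and $t=0=q$ with $\deg f=\deg g=n$, the bounds become $k=m=n$, so there is a monic real-rooted polynomial $h$ of degree $n$ satisfying $f\bint{1}{0} h$ and $h\bint{p-1}{0} g$. Applying Lemma \ref{10} to $f\intrz h$ yields $A\in\H(f)$ and a vector $\alpha$ with $C:=A+\alpha\alpha^*\in\H(h)$; applying the inductive hypothesis to $h\bint{p-1}{0} g$ yields $C'\in\H(h)$ and $B'\in\H(g)$ with $B'-C'=\s{p-1}$.

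The final and most delicate step is that $C$ and $C'$ need not coincide: they share only the characteristic polynomial $h$, and are hence merely unitarily similar. I will choose a unitary $V$ with $C=VC'V^*$ and set $B:=VB'V^*$; then $B\in\H(g)$ and
\[
B-A=(B-C)+(C-A)=V(B'-C')V^*+\alpha\alpha^*=V\s{p-1}V^*+\s{1}=\s{p-1}+\s{1}=\s{p},
\]
using the invariance $U(\s{k})U^*=\s{k}$ recorded before Lemma \ref{pqm}. This unitary realignment is the main obstacle I anticipate: without it, the two halves of the induction live in a priori unrelated coordinate systems, and the decomposition $B-A=\s{p}$ cannot be assembled. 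Everything else in the argument is mechanical once Lemma \ref{pqst} supplies the intermediate polynomial of the correct degree.
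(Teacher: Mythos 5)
Your proof is correct and takes essentially the same route as the paper: induction on $p$, inserting an intermediate polynomial via Lemma~\ref{pqst}, invoking Lemma~\ref{10} for the rank-one step, and realigning the two copies of $\H(h)$ by unitary conjugation. The only cosmetic difference is that you split $f\bint{p}{0}g$ as $f\bint{1}{0}h$ and $h\bint{p-1}{0}g$, whereas the paper peels the single interlacing step off the other side, splitting as $f\bint{p-1}{0}h$ and $h\bint{1}{0}g$.
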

\begin{proof}
We proceed by induction on $p$.
The statement for case $p=1$ follows from Lemma \ref{10}.
Suppose now that $p>1$ and $f\bint{p}{0} g$.
Then by Lemma \ref{pqst},
these exists $h\in\RM_n$
such that $f\bint{p-1}{0} h$ and $h\bint{1}{0} g$.
By $f\bint{p-1}{0} h$ and the induction hypothesis,
these exist $A\in\H(f)$ and $C\in\H(h)$
such that $C-A=\s{p-1}$.
By $h\bint{1}{0} g$ and Lemma \ref{10},
these exist $B_1\in\H(g)$ and $C_1\in\H(h)$
such that $B_1-C_1=\s{1}$.
Since two Hermitian matrices $C$ and $C_1$ have the same characteristic polynomial,
there is a unitary matrix $U$ such that $UC_1U^*=C$.
Define $B=UB_1U^*$.
Then $B\in\H(g)$ and
$$B-C=U(B_1-C_1)U^*=U\left(\s{1}\right)U^*=\s{1}.$$
It follows that
$$B-A=(B-C)+(C-A)=\s{1}+\s{p-1}=\s{p},$$
as required.
Thus the proof is complete by induction.
\end{proof}

We are now in a position to prove the theorem.

\begin{proof}[Proof of Theorem \ref{main-thm}]
By Lemma \ref{pqm},
it suffices to show that if $f\bint{p}{q} g$,
then there exist two Hermitian matrices $A\in\H(f)$ and $B\in\H(g)$
such that $B-A=\s{p}-\s{q}$.

If $p=0$ or $q=0$, then the statement follows from Lemma \ref{p0-lem}.
Assume next that $p,q>0$.
Then by Lemma \ref{pqst},
there exists $h\in\RM_n$
such that $f\bint{p}{0} h$ and $g\bint{q}{0} h$.
By $f\bint{p}{0} h$ and Lemma \ref{p0-lem},
there exist $A\in\H(f)$ and $C\in\H(h)$ such that $C-A=\s{p}$.
By $g\bint{q}{0} h$ and Lemma \ref{p0-lem},
there exist $B_1\in\H(g)$ and $C_1\in\H(h)$ such that $C_1-B_1=\s{q}$.
Since two Hermitian matrices $C$ and $C_1$ have the same characteristic polynomial,
there is a unitary matrix $U$ such that $UC_1U^*=C$.
Define $B=UB_1U^*$.
Then $B\in H(g)$ and
$$C-B=U(C_1-B_1)U^*=U\left(\s{q}\right)U^*=\s{q}.$$
It follows that
$$B-A=(C-A)-(C-B)=\s{p}-\s{q},$$
as required.
This completes the proof of the theorem.
\end{proof}

\begin{rem}
We can also prove Theorem \ref{main-thm} directly from Lemmas \ref{pqst} and \ref{10} by induction on $p+q$.
\end{rem}

\section{Remarks}

Let $\alpha=(a_1,\ldots,a_k)$ and $\beta=(b_1,\ldots,b_k)$ be two real vectors whose entries arranged in non-increasing order.
Denote $\alpha\le\beta$ if $\beta-\alpha$ is non-negative,
and further denote $\alpha\le_r\beta$ if $\beta-\alpha$ has at least $r$ positive entries.
Let $f$ and $g$ be two monic real-rooted polynomials of degree $n$.
By the definition, $f\bint{p}{q} g$ is equivalent to
$(r_{q+1}(f),\ldots,r_n(f))\le (r_1(g),\ldots,r_{n-q}(g))$
and
$(r_{p+1}(g),\ldots,r_n(g))\le (r_1(f),\ldots,r_{n-p}(f))$.
Denote $f\bb{p}{q} g$ if
$$(r_{q+1}(f),\ldots,r_n(f))\le_p (r_1(g),\ldots,r_{n-q}(g))$$
and
$$(r_{p+1}(g),\ldots,r_n(g))\le_q (r_1(f),\ldots,r_{n-p}(f)).$$
The notation $\bb{p}{q}$ enjoys some properties similar to $\bint{p}{q}$.
For example,
\begin{enumerate}[\rm (i)]
  \item If $f\bb{p}{q} g$,
   then there exists $h$ such that $f\bb{p}{0} h$ and $g\bb{q}{0} h$.
  \item If $f\bb{p}{0} g$,
   then there exists $h$ such that $f\bb{p-1}{0} h$ and $h\intrz g$.
\end{enumerate}
To prove them, it suffices to choose $r_i(h)$ in the proof of Lemma \ref{pqst}
as far as possible within the interval $I_i$ unless $I_i$ consists of only one point.

Theorem~\ref{main-thm} states that if $f\bint{p}{q} g$,
then there exist two Hermitian matrices $A\in\H(f)$ and $B\in\H(g)$
such that $n_+(B-A)\le p$ and $n_-(B-A)\le q$.
Given $0\le s\le p$ and $0\le t\le q$,
a natural problem is when there exist two Hermitian matrices
$A\in\H(f)$ and $B\in\H(g)$ such that
$n_+(B-A)=s$ and $n_-(B-A)=t$.
Li and Poon \cite[Theorem 2.1]{LP10} gave the characterization for the case $s=p$ and $t=q$,
which can also be proved by a similar technique used in the previous section.
We omit the details for the sake of simplicity.

\begin{thm}[{\cite[Theorem 2.1]{LP10}}]\label{thm-lp12}
Let $f,g\in\RM_n$ and $f\bint{p}{q} g$.
Suppose that $p+q\le n$ and $f\bb{p}{q} g$.
Then there exist $A\in\H(f)$ and $B\in\H(g)$ such that
$n_+(B-A)=p$ and $n_-(B-A)=q$.
\end{thm}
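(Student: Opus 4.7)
The plan is to parallel the proof of Theorem~\ref{main-thm}, replacing $\bint{p}{q}$ by $\bb{p}{q}$ at every step and upgrading each rank inequality to a rank equality.

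First I would strengthen Lemma~\ref{10} to: if $f\intrz g$ and $f\neq g$ (equivalently $f\bb{1}{0}g$ or $g\bb{1}{0}f$), then the vector $\alpha$ in the construction there is nonzero, hence $\alpha\alpha^*$ has rank exactly one. In the coprime case, $f\neq g$ forces some $c_i<0$; the general case reduces to the coprime subblock exactly as in the original proof. This handles the base case $(p,q)=(1,0)$.

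Next I would prove the following upgrade of Lemma~\ref{p0-lem}: if $f\bb{p}{0}g$, then there exist $A\in\H(f)$ and $B\in\H(g)$ with $B-A$ PSD of rank \emph{exactly} $p$. The induction on $p$ uses a refinement of property~(ii) of \S3: if $f\bb{p}{0}g$ with $p\ge 1$, one can find $h$ with $f\bb{p-1}{0}h$ and $h\bb{1}{0}g$, which amounts to spending one of the $p$ strict inequalities on the final step of the interval construction in Lemma~\ref{pqst}. The induction hypothesis and the strengthened Lemma~\ref{10} then produce matrices which, after matching by a unitary, yield two PSD summands of ranks $p-1$ and $1$; for the total rank to be $p$ one needs the rank-$1$ addition to lie outside the range of the rank-$(p-1)$ piece, which is arranged by exploiting the freedom in the matching unitary (a coset of the centralizer of the corresponding element of $\H(h)$).

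For $p,q\ge 1$, I would use property~(i) of \S3 to obtain $h\in\RM_n$ with $f\bb{p}{0}h$ and $g\bb{q}{0}h$, apply the upgraded $(p,0)$-lemma to each pair to produce $A\in\H(f)$, $C,C_1\in\H(h)$, $B_1\in\H(g)$ with $C-A$ PSD of rank exactly $p$ and $C_1-B_1$ PSD of rank exactly $q$, pick a unitary $U$ with $UC_1U^*=C$, and set $B=UB_1U^*$. Then $B-A=(C-A)-(C-B)$ is the difference of two PSD matrices of ranks $p$ and $q$, so \emph{a priori} only $n_+(B-A)\le p$ and $n_-(B-A)\le q$.

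The main obstacle is upgrading these bounds to equalities. The ranges of $C-A$ and $C-B$ can intersect and create cancellation; this is precisely where the hypothesis $p+q\le n$ enters, since it is exactly the condition under which $\mathrm{range}(C-A)$ and $\mathrm{range}(C-B)$ can be placed as orthogonal $p$- and $q$-dimensional subspaces of $\mathbb{C}^n$. To secure enough freedom for this alignment, I would choose $h$ in the interval construction of property~(i) with sufficient eigenvalue multiplicity (grouping $r_i(h)=r_{i+1}(h)$ whenever the intervals $I_i$ and $I_{i+1}$ of Lemma~\ref{pqst} overlap), enlarging the centralizer of $C$ and allowing the matching unitary to rotate $\mathrm{range}(C-B)$ into the orthogonal complement of $\mathrm{range}(C-A)$. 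Once orthogonality is achieved, the positive and negative signatures of $B-A$ read off directly from the two summands, giving $n_+(B-A)=p$ and $n_-(B-A)=q$.
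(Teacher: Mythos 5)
The paper does not actually supply a proof of this theorem: it cites Li--Poon \cite[Theorem 2.1]{LP10} and merely remarks that the result ``can also be proved by a similar technique used in the previous section.'' Your high-level plan is exactly the one the paper alludes to (parallel Theorem \ref{main-thm}, upgrade $\bint{p}{q}$ to $\bb{p}{q}$, upgrade rank bounds to equalities), and the base case and the identification of $p+q\le n$ as the crucial hypothesis are correct. However, the inductive steps are not yet proofs.

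The genuine gap is in forcing the rank \emph{equalities}. In the decomposition $B-A=(C-A)-(C-B)$ you obtain $C-A$ and $C-B$ as positive semidefinite matrices of ranks (at most) $p$ and $q$, and you need $\mathrm{range}(C-A)\cap\mathrm{range}(C-B)=\{0\}$ to get $n_\pm(B-A)=(p,q)$. You correctly observe that the only freedom left after invoking the two $(p,0)$-lemmas is the choice of unitary $U$ in the coset $U_0Z(C_1)$, but that freedom can be \emph{trivial on the relevant subspace}: if, say, the rank-one vector $\gamma$ realizing $C_1-B_1=\gamma\gamma^*$ happens to be an eigenvector of $C_1$ (which occurs whenever $g$ and $h$ differ in only one root, a situation the Lemma \ref{10} construction readily produces), then $\mathrm{span}(U\gamma)$ is a \emph{fixed} eigendirection of $C$ independent of the choice in $Z(C_1)$, and it may coincide with $\mathrm{range}(C-A)$. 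Concretely, for $f=(x-4)(x-1)$, $g=(x-4)(x-2)$, $h=(x-4)(x-3)$ one has $f\bb{1}{0}h$ and $g\bb{1}{0}h$, yet the Lemma \ref{10} construction gives $C-A$ and $C_1-B_1$ both supported on the $3$-eigenvector of $h$, and the resulting $B-A$ is rank one rather than of inertia $(1,1)$; one must instead pick a different $h$ (e.g.\ $h=(x-5)(x-2)$) to avoid the collision. Your proposed remedy --- inflating the centralizer by clumping equal roots of $h$ --- goes in the wrong direction: repeated roots of $h$ do not by themselves separate the two ranges, and in the failing example above the collision is caused precisely by $f,g,h$ sharing roots, not by $h$ lacking multiplicity. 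What is actually needed is an argument controlling the choice of $h$ (and of the realizations $(A,C)$, $(B_1,C_1)$) so that the two perturbation ranges can be arranged to intersect trivially, and a verification that the hypothesis $p+q\le n$ together with $f\bb{p}{q}g$ always leaves room for such a choice; as written this step is asserted rather than proved. The same issue already appears in your upgraded version of Lemma \ref{p0-lem}, where the rank-one piece must be shown to escape the range of the rank-$(p-1)$ piece.
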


Another result closely related to Theorem \ref{main-thm} is Cauchy's interlacing theorem,
which states that
a Hermitian matrix $A$ interlaces its bordered Hermitian matrix
$\left[
   \begin{array}{cc}
     A & \alpha \\
     \alpha^* & a \\
   \end{array}
 \right]$
(see \cite[Theorem 4.3.17]{HJ13} for instance).
More generally,
the inclusion principle states that two Hermitian matrices
$A\bint{p}{0}\left[
   \begin{array}{cc}
     A & B \\
     B^* & C \\
   \end{array}
 \right]$,
where $C$ is a $p\times p$ Hermitian matrix
(see \cite[Theorem 4.3.28]{HJ13} for instance).
Fan and Pall \cite[Theorem 1]{FP57} established the following converses of the inclusion principle.

\begin{thm}[{\cite[Theorem 1]{FP57}}]\label{cip}
Let $f$ and $g$ be two monic real-rooted polynomials
satisfying $\deg g=\deg f+p$ and $f\bint{p}{0} g$.
Then there are two Hermitian matrices $A$ and
$\left[
   \begin{array}{cc}
     A & B \\
     B^* & C \\
   \end{array}
 \right]$
such that their characteristic polynomials are $f$ and $g$ respectively.
\end{thm}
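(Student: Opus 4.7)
The plan is to mirror the induction in the proof of Lemma \ref{p0-lem}, inducting on $p$ and using Lemma \ref{pqst} to peel off one column at a time, but with rank-one additive perturbations replaced by single-row/column borderings. The base case is the classical converse of Cauchy's interlacing theorem for bordering a Hermitian matrix by one row and column.

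For the base case $p=1$ we have $\deg g = \deg f + 1$ and $f \bint{1}{0} g$. Assume first that $f$ and $g$ are coprime, so $f$ has only simple roots; set $r_i = r_i(f)$, $A = \diag(r_1, \ldots, r_n)$, and $f_i(x) = f(x)/(x - r_i)$. Laplace expansion along the last row gives
\[
\det\!\left(xI_{n+1} - \begin{bmatrix} A & \alpha \\ \alpha^* & a \end{bmatrix}\right)
= (x-a)f(x) - \sum_{i=1}^n |a_i|^2 f_i(x),
\]
with $\alpha = (a_1,\ldots,a_n)^T$. Evaluating at $x = r_i$ forces $|a_i|^2 = -g(r_i)/f_i(r_i)$; the signs $\sgn f_i(r_i) = (-1)^{i-1}$ and $\sgn g(r_i) = (-1)^i$ (from the strict interlacing, which is forced by coprimality) make the right-hand side non-negative, so we may set $a_i = \sqrt{-g(r_i)/f_i(r_i)}$. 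The scalar $a$ is then fixed by matching the $x^n$-coefficient of $g$; since a monic polynomial of degree $n+1$ is determined by its values at $n$ distinct points together with its $x^n$-coefficient, the bordered matrix lies in $\H(g)$. The non-coprime case is reduced exactly as at the end of the proof of Lemma \ref{10}: factor out $d = \gcd(f,g)$, apply the coprime case to $(f/d) \bint{1}{0} (g/d)$ (coprimality of $f/d$ and $g/d$ via Corollary \ref{ppq}), adjoin a diagonal block carrying the roots of $d$, and conjugate by a permutation so that this block lies inside the upper-left $(\deg f) \times (\deg f)$ corner.

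For the inductive step ($p \geq 2$), applying Lemma \ref{pqst} with $(s,t) = (1,0)$ to $f \bint{p}{0} g$ yields $k = m = \deg f + 1$, so there exists a monic real-rooted $h$ of degree $\deg f + 1$ with $f \bint{1}{0} h$ and $h \bint{p-1}{0} g$ (the degree gap $\deg g - \deg h = p - 1$ is exactly what the inductive hypothesis requires). The base case produces $A \in \H(f)$ and a bordering $M_1 \in \H(h)$ whose upper-left $n \times n$ block is $A$; the inductive hypothesis produces $M_1' \in \H(h)$ and a bordering $M_2' \in \H(g)$ whose upper-left $(n+1)\times(n+1)$ block is $M_1'$. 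As in the proof of Lemma \ref{p0-lem}, choose a unitary $U$ with $UM_1'U^* = M_1$ and conjugate $M_2'$ by $U \oplus I_{p-1}$; the result $M_2$ lies in $\H(g)$, its upper-left $(n+1)\times(n+1)$ block equals $M_1$, and hence its upper-left $n \times n$ block equals $A$. So $M_2$ is of the required form $\begin{bmatrix} A & B \\ B^* & C \end{bmatrix}$ with $C$ of size $p \times p$, completing the induction.

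The main obstacle is the base case --- specifically the sign analysis guaranteeing $|a_i|^2 \geq 0$, which is where the Cauchy interlacing hypothesis enters in an essential way. Everything else is structural: one must track the degree gap $\deg g = \deg f + p$ carefully through Lemma \ref{pqst} so that the intermediate $h$ has exactly the right degree $\deg f + 1$, and observe that the composition of borderings is again a bordering, which is transparent from the block decomposition once the correct unitary conjugation is applied.
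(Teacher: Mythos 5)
Your proposal is correct and is exactly the fleshing-out of the one-sentence strategy the paper sketches (base case via the technique of Lemma~\ref{10} applied to a bordered Schur-complement identity, then induction on $p$ through Lemma~\ref{pqst} with $(s,t)=(1,0)$, gluing the two borderings by a unitary conjugation $U\oplus I_{p-1}$). The degree bookkeeping $k=m=\deg f+1$ in Lemma~\ref{pqst} and the sign analysis $\sgn g(r_i)=(-1)^i$, $\sgn f_i(r_i)=(-1)^{i-1}$ in the coprime base case are both verified correctly.
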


We can prove the converses of Cauchy's interlacing theorem
by the same technique used in the proof of Lemma \ref{10},
and then prove the converses of the inclusion principle by induction on $p$.

\section*{Acknowledgement}

This work was supported partially by the National Natural Science Foundation of China
(No. 11771065).
The authors thank the anonymous referee for his/her careful reading and kind comments.


\end{document}